\documentclass[11pt]{article}

\usepackage{amsmath,amssymb,amsthm}
\usepackage{enumitem}
\usepackage{booktabs}
\usepackage{graphicx}
\usepackage[hidelinks]{hyperref}
\usepackage{tikz}
\usetikzlibrary{arrows.meta,positioning}

\newtheorem{theorem}{Theorem}[section]
\newtheorem{proposition}[theorem]{Proposition}

\theoremstyle{definition}
\newtheorem{definition}[theorem]{Definition}

\theoremstyle{remark}
\newtheorem{remark}[theorem]{Remark}

\newcommand{\E}{\mathbb{E}}
\newcommand{\Prob}{\mathbb{P}}
\newcommand{\R}{\mathbb{R}}

\newcommand{\dinf}{d_\infty}

\title{Approximation of Probability Geometries\\ via Convex Families of Coordinate Charts}
\author{Manuela-Simona Cojocea}
\date{}

\begin{document}

\maketitle

\begin{abstract}
Within probability-coordinate geometry, a continuous
distribution function restricted to an interval acts as a \emph{probability
coordinate chart}, and averaging performed in probability coordinates defines
the \emph{probability barycenter} $b_G(X)=G^{-1}(\E[G(X)])$ together with the
associated Kolmogorov moments. The class of admissible charts is infinite
dimensional. This paper develops a finite-dimensional approximation theory for
it. Fixing a base chart $F$, every chart on the same interval factorizes
uniquely as $G=K\circ F$, where $K$ is a strictly increasing deformation map of
the unit interval; the choice of a probability geometry is thereby reduced to
the choice of a deformation of the canonical probability scale. We prove that
monotone piecewise-linear deformations admit an exact representation as convex
combinations of elementary ramp activations, so that probability geometries are
parametrised by allocation vectors in a simplex, whose relative interior
corresponds exactly to admissible charts. Piecewise-linear geometries are dense
in the admissible class, and two constructive schemes are analysed: ramp
interpolation and Bernstein approximation. The Bernstein scheme is shown to be
itself a ramp-dictionary representation with smooth beta atoms---the
distribution functions of uniform order statistics---and it regularizes:
Bernstein approximants of arbitrary monotone deformations are automatically
admissible. Uniform approximation of deformations transfers to the geometric
functionals considered here: probability barycenters and initial
Kolmogorov moments are stable under $\dinf$-perturbations of the deformation
map, with explicit bounds. For deformed intrinsic charts the barycenter is a
quantile whose level is determined by the deformation alone, which links the
present construction to chart-based quantile estimators. All
numerical illustrations are deterministic and reproducible.
\end{abstract}

\section{Introduction}\label{sec:intro}

The probability-coordinate approach to expectation
\cite{Cojocea2026a,Cojocea2026b} gives a geometric interpretation in which a
continuous distribution function,
restricted to an interval where it is strictly increasing, is read as a
\emph{probability coordinate chart}: a coordinate map transporting value space
into the unit interval. Averaging performed in probability coordinates and
pulled back through the inverse chart defines the \emph{probability barycenter}
\[
b_G(X)=G^{-1}\!\bigl(\E[G(X)]\bigr),
\]
a Kolmogorov mean \cite{Kolmogorov1930,Nagumo1930} whose existence requires no
integrability of $X$, together with the initial Kolmogorov moments
$M_r^{(G)}(X)=G^{-1}(\E[(G(X))^r])$. The geometric foundations and limit
theorems are developed in \cite{Cojocea2026a}, while statistical inference,
organized around the choice of the chart, is studied in \cite{Cojocea2026b}.
Related applications treat quantile estimation for stable laws
\cite{CojoceaPele2026} and exponential families transported through charts
\cite{Cojocea2026c}.

In all of these developments the chart $G$ is an infinite-dimensional
parameter: any continuous distribution function strictly increasing on the
relevant interval qualifies. A central practical question, raised explicitly in
the chart-selection problem of \cite{Cojocea2026b}, is therefore whether the
class of probability geometries admits tractable finite-dimensional
parametrisations. The present paper answers this question through a convex
approximation theory.

The starting point is a factorization. Fix a base chart $F$ on an interval
$I$---for instance the intrinsic chart $F=F_X$ of the random variable under
study, or a benchmark chart in the sense of \cite{Cojocea2026a}. Every chart
$G$ on $I$ then factorizes uniquely as
\[
G = K \circ F,
\]
where $K:[0,1]\to[0,1]$ is a strictly increasing continuous map fixing the
endpoints (Proposition~\ref{prop:correspondence}). The map $F$ carries the
probabilistic structure; the \emph{deformation map} $K$ redistributes geometric
resolution across the canonical probability scale. Regions where $K$ varies
rapidly receive greater geometric emphasis; regions where $K$ varies slowly are
geometrically compressed. The choice of a probability geometry is thereby
reduced to the choice of a deformation of the canonical probability space, and
the approximation problem for charts is reduced to an approximation problem for
deformation maps on $[0,1]$.

The main results are as follows.

\begin{enumerate}[label=(\roman*)]
\item \emph{Exact correspondence} (Section~\ref{sec:deformation}): relative to
a base chart, probability geometries on an interval are in bijection with
admissible deformation maps, and convex combinations of deformation maps are
again deformation maps, so that finite dictionaries generate simplices of
geometries.
\item \emph{Ramp decomposition} (Section~\ref{sec:ramp}): every non-decreasing
piecewise-linear deformation is an exact convex combination of elementary ramp
activations localised on the partition intervals; the weights
$\lambda_k=K(t_k)-K(t_{k-1})$ describe how deformation is allocated across
probability space, and the admissible (strictly increasing) deformations
correspond exactly to the relative interior of the simplex.
\item \emph{Density and constructive schemes} (Section~\ref{sec:approx}):
piecewise-linear geometries are dense in the admissible class; Bernstein
approximants provide a smooth constructive scheme which is itself a
ramp-dictionary representation whose atoms are the beta distribution functions
of uniform order statistics, and which automatically produces admissible
deformations.
\item \emph{Transfer to probability geometry}
(Section~\ref{sec:transfer}): uniform approximation of deformation maps
transfers to uniform approximation of charts, and further to stability of the
probability barycenters and initial Kolmogorov moments of
\cite{Cojocea2026a,Cojocea2026b}, with explicit quantitative bounds. For
deformed intrinsic charts, the barycenter is a quantile whose level is
determined by the deformation map alone.
\item \emph{Reproducible illustrations} (Sections~\ref{sec:examples}
and~\ref{sec:numerics}): worked ramp geometries for the standard normal
distribution, and a deterministic numerical study of the two constructive schemes on
an exponential-tilt deformation, including the induced barycenter errors.
\end{enumerate}

Throughout, charts, barycenters, and Kolmogorov moments use the terminology and
notation of \cite{Cojocea2026a}, while the inferential reading of estimated
charts follows \cite{Cojocea2026b}.

\section{Deformation maps and probability coordinate charts}
\label{sec:deformation}

\subsection{Probability coordinate charts}

We recall the basic objects of the framework; see \cite{Cojocea2026a} for the
full development.

\begin{definition}[Probability coordinate chart, \cite{Cojocea2026a}]
\label{def:chart}
Let $\widetilde G:\R\to[0,1]$ be a continuous cumulative distribution function
and let $I=(a,b)\subseteq\R$ be an interval such that the restriction
$G=\widetilde G|_I$ is strictly increasing with
\[
\lim_{x\to a^+}G(x)=0,
\qquad
\lim_{x\to b^-}G(x)=1.
\]
Then $G:I\to(0,1)$ is called the \emph{probability coordinate chart} induced by
$\widetilde G$.
\end{definition}

A chart is thus a continuous strictly increasing bijection from $I$ onto
$(0,1)$ arising from a distribution function. For an $I$-valued random
variable $X$, the coordinate $G(X)\in(0,1)$ is bounded, so $\E[G(X)]$ always
exists, and the \emph{probability barycenter} (Kolmogorov expectation induced
by $G$)
\[
b_G(X)=G^{-1}\!\bigl(\E[G(X)]\bigr)
\]
is well defined for arbitrary $X$, without integrability assumptions
\cite{Cojocea2026a}. Similarly, for $r\ge1$ the \emph{initial Kolmogorov
moment} of order $r$ is
\[
M_r^{(G)}(X)=G^{-1}\!\bigl(\E[(G(X))^r]\bigr).
\]
When $X$ has continuous strictly increasing distribution function $F_X$ on
$I$, the chart $G=F_X$ is called \emph{intrinsic}; then $F_X(X)\sim
\mathrm{Uniform}(0,1)$, and the intrinsic coordinates are canonical for the law
of $X$.

\subsection{Deformation maps}

\begin{definition}[Deformation maps]\label{def:deformation}
A \emph{monotone deformation map} is a continuous non-decreasing function
$K:[0,1]\to[0,1]$ with $K(0)=0$ and $K(1)=1$. A monotone deformation map is
called \emph{admissible} if it is strictly increasing. We write $\mathcal K$
for the class of monotone deformation maps and $\mathcal K^\circ\subset
\mathcal K$ for the admissible ones.
\end{definition}

Admissibility is exactly what is needed for composition with a chart to
produce a chart; the strictness requirement corrects the naive expectation that
monotonicity alone suffices, and the distinction is structural for the convex
families studied below: natural dictionaries have non-admissible extreme
points, while their convex interiors are admissible
(Proposition~\ref{prop:interior}).

\begin{proposition}[Charts correspond to admissible deformations]
\label{prop:correspondence}
Let $F$ be a probability coordinate chart on the interval $I$.
\begin{enumerate}[label=(\alph*)]
\item If $K\in\mathcal K^\circ$, then $G=K\circ F$ is a probability coordinate
chart on $I$.
\item Conversely, if $G$ is a probability coordinate chart on $I$, then
$K=G\circ F^{-1}:(0,1)\to(0,1)$ extends uniquely to a map in
$\mathcal K^\circ$, and $G=K\circ F$.
\end{enumerate}
Consequently $K\mapsto K\circ F$ is a bijection between $\mathcal K^\circ$ and
the class of probability coordinate charts on $I$.
\end{proposition}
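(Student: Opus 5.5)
The plan is to work directly from Definition~\ref{def:chart}. The one point needing care is that a chart is not just a strictly increasing bijection $I\to(0,1)$: it must be the restriction of a continuous distribution function on all of $\R$. So in (a) I will exhibit such a distribution function explicitly.

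For (a), let $\widetilde F$ be a continuous distribution function inducing $F$, and set $\widetilde G=K\circ\widetilde F$ on $\R$. Since $\widetilde F$ and $K$ are continuous and non-decreasing, so is $\widetilde G$. Its limits are $K(0)=0$ at $-\infty$ and $K(1)=1$ at $+\infty$, because $K$ is continuous and fixes the endpoints. Hence $\widetilde G$ is a continuous distribution function. Its restriction to $I$ is $K\circ F$, which is strictly increasing as a composition of strictly increasing maps; this is the only place where admissibility $K\in\mathcal K^\circ$ is used. The boundary limits are
\[
\lim_{x\to a^+}K(F(x))=K(0)=0,\qquad \lim_{x\to b^-}K(F(x))=K(1)=1,
\]
again by continuity of $K$ at $0$ and $1$. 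Therefore $G=K\circ F$ is a chart on $I$.

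For (b), first note that $F:I\to(0,1)$ is a continuous strictly increasing bijection. Hence $F^{-1}:(0,1)\to I$ is continuous and strictly increasing. It follows that $K=G\circ F^{-1}$ is a continuous strictly increasing bijection of $(0,1)$ onto itself, and $G=K\circ F$ on $I$ holds by construction. Put $K(0)=0$ and $K(1)=1$. Continuity at $0$ follows because $u\to0^+$ gives $F^{-1}(u)\to a^+$, hence $G(F^{-1}(u))\to0$; the argument at $1$ is symmetric. The extended map is strictly increasing on $[0,1]$ because its values on $(0,1)$ lie strictly between $0$ and $1$.

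Uniqueness in (b) has two parts:
\begin{itemize}
\item any $K'\in\mathcal K^\circ$ with $K'\circ F=G$ agrees with $K$ on $F(I)=(0,1)$;
\item the values at $0$ and $1$ are then forced, either by the definition of $\mathcal K$ or by continuity.
\end{itemize}

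For the bijection statement, surjectivity of $K\mapsto K\circ F$ onto the class of charts is (b), and well-definedness is (a). Injectivity is the same argument as uniqueness: if $K_1\circ F=K_2\circ F$, then $K_1=K_2$ on $(0,1)$ by surjectivity of $F$, and also at the endpoints.

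The main obstacle is the step in (a) of producing the ambient distribution function $\widetilde G$ rather than just a bijection on $I$. Composing with $\widetilde F$ globally handles it without any case analysis on whether $a$ or $b$ is finite.
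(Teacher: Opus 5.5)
Your proof is correct and follows the paper's argument essentially step for step. In (a) you check strict monotonicity and the endpoint limits of $K\circ F$ and exhibit the ambient distribution function $K\circ\widetilde F$, which is what the paper's $\widetilde K\circ\widetilde G$ is meant to denote; in (b) you form $G\circ F^{-1}$ and extend it by setting $K(0)=0$ and $K(1)=1$. The only cosmetic differences are two. You derive the endpoint limits of $K$ from the chart limits of $G$ via $F^{-1}(u)\to a^+$, whereas the paper reads them off from $K$ being an increasing bijection of $(0,1)$. You also spell out injectivity, which the paper leaves implicit in its uniqueness claim.
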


\begin{proof}
(a) The composition $K\circ F:I\to(0,1)$ is continuous and strictly increasing
as a composition of such maps, and
\[
\lim_{x\to a^+}K(F(x))=K(0)=0,
\qquad
\lim_{x\to b^-}K(F(x))=K(1)=1 ,
\]
using the continuity of $K$ at the endpoints. Moreover $\widetilde
K\circ\widetilde G$ is a continuous distribution function on $\R$ whenever
$\widetilde G$ is one inducing $F$, so $K\circ F$ is induced by a distribution
function and Definition~\ref{def:chart} applies.

(b) Both $F$ and $G$ are continuous strictly increasing bijections from $I$
onto $(0,1)$, so $K=G\circ F^{-1}$ is a continuous strictly increasing
bijection of $(0,1)$ onto itself; hence
$\lim_{u\to0^+}K(u)=0$ and $\lim_{u\to1^-}K(u)=1$, and setting $K(0)=0$,
$K(1)=1$ yields the unique continuous extension, which lies in
$\mathcal K^\circ$. By construction $K\circ F=G$ on $I$. Uniqueness of the
factorization is immediate: $K$ is determined on $(0,1)$ by $K=G\circ F^{-1}$.
\end{proof}

\begin{remark}[Geometry, not transformation]\label{rem:geometry}
The factorization $G=K\circ F$ may superficially resemble a monotone
transformation of the data. The geometric reading is different, and it is the
one used in probability-coordinate geometry: a classical transformation acts in
value space, $x\mapsto g(x)$, modifying the observable; here the observable is
first transported into intrinsic probability coordinates $u=F(x)$, and the
deformation $u\mapsto K(u)$ acts \emph{inside probability space},
reallocating coordinate resolution across probability levels. The pair
$(F,K)$---not the law of $X$ alone---specifies the geometry: a single
distribution supports infinitely many probability geometries, one for each
admissible deformation of its canonical coordinates. This is the
representation-dependence of expectation emphasized in \cite{Cojocea2026a},
localized in the deformation component.
\end{remark}

\subsection{Convex families of deformation maps}

\begin{definition}[Dictionary and convex family]\label{def:dictionary}
A \emph{dictionary} of deformation maps is a finite collection
$\mathcal D=\{K_1,\dots,K_m\}\subset\mathcal K$. The associated convex family
is
\[
K_\lambda=\sum_{j=1}^m\lambda_j K_j,
\qquad
\lambda\in\Delta^{m-1}
=\Bigl\{\lambda\in\R^m:\lambda_j\ge0,\ \textstyle\sum_{j=1}^m\lambda_j=1\Bigr\}.
\]
\end{definition}

\begin{proposition}[Convexity]\label{prop:convexity}
$\mathcal K$ is convex: $K_\lambda\in\mathcal K$ for every dictionary
$\mathcal D\subset\mathcal K$ and every $\lambda\in\Delta^{m-1}$. Moreover, if
$\lambda_j>0$ for at least one index $j$ with $K_j\in\mathcal K^\circ$, then
$K_\lambda\in\mathcal K^\circ$.
\end{proposition}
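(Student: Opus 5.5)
The plan is to verify the defining properties of $\mathcal K$ for $K_\lambda$ one at a time, and then upgrade non-decreasing to strictly increasing under the extra hypothesis. Fix a dictionary $\mathcal D=\{K_1,\dots,K_m\}\subset\mathcal K$ and $\lambda\in\Delta^{m-1}$. The map $K_\lambda=\sum_j\lambda_jK_j$ is a finite linear combination of continuous functions on $[0,1]$, so it is continuous. The endpoint conditions follow directly from $\sum_j\lambda_j=1$:
\[
K_\lambda(0)=\sum_j\lambda_jK_j(0)=0,
\qquad
K_\lambda(1)=\sum_j\lambda_jK_j(1)=\sum_j\lambda_j=1 .
\]

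For monotonicity, take $0\le u<v\le1$ and write
\[
K_\lambda(v)-K_\lambda(u)=\sum_{j=1}^m\lambda_j\bigl(K_j(v)-K_j(u)\bigr).
\]
Each summand is a product of $\lambda_j\ge0$ with $K_j(v)-K_j(u)\ge0$, so the sum is non-negative. The range condition $K_\lambda([0,1])\subseteq[0,1]$ then comes for free: a non-decreasing function satisfies $K_\lambda(0)\le K_\lambda(u)\le K_\lambda(1)$, so its values lie in $[0,1]$. Alternatively, $K_\lambda(u)$ is a convex combination of numbers in $[0,1]$. Hence $K_\lambda\in\mathcal K$. Taking $m=2$ gives convexity of $\mathcal K$ in the usual sense.

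For the strict statement, suppose $\lambda_{j_0}>0$ with $K_{j_0}\in\mathcal K^\circ$. In the displayed difference, every summand is non-negative. The $j_0$ summand equals $\lambda_{j_0}\bigl(K_{j_0}(v)-K_{j_0}(u)\bigr)$, which is strictly positive because $K_{j_0}$ is strictly increasing. The total is therefore strictly positive, so $K_\lambda\in\mathcal K^\circ$.

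No step is a genuine obstacle. The only point needing care is to use the sign of every summand in the difference, rather than only the admissible one, since the remaining $K_j$ may be merely non-decreasing.
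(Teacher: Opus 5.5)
Your proof is correct and follows the paper's argument. Continuity, monotonicity and the endpoint values pass to convex combinations, and strictness comes from the lower bound $K_\lambda(v)-K_\lambda(u)\ge\lambda_{j_0}\bigl(K_{j_0}(v)-K_{j_0}(u)\bigr)>0$ supplied by the admissible summand. Your explicit check that $K_\lambda$ takes values in $[0,1]$ is a detail the paper leaves implicit.
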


\begin{proof}
Convex combinations of continuous non-decreasing functions are continuous and
non-decreasing, and
\[
K_\lambda(0)=\sum_j\lambda_jK_j(0)=0,
\qquad
K_\lambda(1)=\sum_j\lambda_jK_j(1)=1 .
\]
For the second claim, if $u<v$ then
$K_\lambda(v)-K_\lambda(u)\ge\lambda_j\bigl(K_j(v)-K_j(u)\bigr)>0$.
\end{proof}

Through Proposition~\ref{prop:correspondence}, each $\lambda$ with
$K_\lambda\in\mathcal K^\circ$ defines a probability coordinate chart
\[
G_\lambda=K_\lambda\circ F,
\]
and the simplex $\Delta^{m-1}$ becomes a finite-dimensional parameter space of
probability geometries anchored at the base chart $F$. Rather than selecting a
single chart, one selects a convex allocation of elementary geometric
components---a formulation directly compatible with the estimated-chart
programme of \cite{Cojocea2026b}, where finite-dimensional chart families are
calibrated on data.

\section{Ramp decomposition of piecewise-linear geometries}
\label{sec:ramp}

\subsection{The ramp dictionary}

\begin{theorem}[Ramp decomposition of monotone piecewise-linear deformations]
\label{thm:ramp}
Let $0=t_0<t_1<\dots<t_n=1$ be a partition of $[0,1]$ and let
$K\in\mathcal K$ be piecewise linear with respect to this partition. For
$k=1,\dots,n$ define the \emph{ramp}
\[
H_k(u)=
\begin{cases}
0, & u\le t_{k-1},\\[4pt]
\dfrac{u-t_{k-1}}{t_k-t_{k-1}}, & t_{k-1}<u<t_k,\\[8pt]
1, & u\ge t_k.
\end{cases}
\]
Then
\[
K(u)=\sum_{k=1}^n\lambda_k H_k(u),
\qquad
\lambda_k=K(t_k)-K(t_{k-1}),
\]
with $\lambda_k\ge0$ and $\sum_{k=1}^n\lambda_k=1$. Thus every monotone
piecewise-linear deformation map lies in the convex hull of the ramp dictionary
$\mathcal H_n=\{H_1,\dots,H_n\}$.
\end{theorem}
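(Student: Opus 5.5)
The plan is to compare $K$ and $S(u):=\sum_{k=1}^n\lambda_k H_k(u)$ on each closed partition interval $[t_{j-1},t_j]$. Both are affine there, so it suffices to check that they agree at the two endpoints. The weight statements are immediate and I will dispose of them first. Since $K\in\mathcal K$ is non-decreasing, $\lambda_k=K(t_k)-K(t_{k-1})\ge0$. The sum telescopes: $\sum_k\lambda_k=K(t_n)-K(t_0)=K(1)-K(0)=1$. Hence $\lambda\in\Delta^{n-1}$. Each $H_k$ is continuous, non-decreasing, and satisfies $H_k(0)=0$, $H_k(1)=1$, so $\mathcal H_n\subset\mathcal K$ and Proposition~\ref{prop:convexity} applies to the family.

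For the identity, fix $u\in[t_{j-1},t_j]$. From the definition of the ramps, $H_k(u)=1$ for $k<j$, since then $u\ge t_{j-1}\ge t_k$. Likewise $H_k(u)=0$ for $k>j$, since then $u\le t_j\le t_{k-1}$. Finally $H_j(u)=(u-t_{j-1})/(t_j-t_{j-1})$. One checks that this formula remains valid at the endpoints $u=t_{j-1}$ and $u=t_j$, where $H_j$ takes the values $0$ and $1$ through the first and third cases. Therefore
\[
S(u)=\sum_{k<j}\lambda_k+\lambda_j\,\frac{u-t_{j-1}}{t_j-t_{j-1}}
=K(t_{j-1})+\bigl(K(t_j)-K(t_{j-1})\bigr)\frac{u-t_{j-1}}{t_j-t_{j-1}},
\]
where the first sum telescopes to $K(t_{j-1})-K(0)=K(t_{j-1})$. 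Since $K$ is affine on $[t_{j-1},t_j]$, the right-hand side is exactly the linear interpolation of $K$ between its values at $t_{j-1}$ and $t_j$. So it equals $K(u)$. The intervals cover $[0,1]$, and at shared endpoints both descriptions give the same value, so $K=S$ everywhere.

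The only delicate point is the bookkeeping of the boundary cases in the piecewise definition of $H_k$. The ramps use the strict inequalities $t_{k-1}<u<t_k$ in the middle case, so I will state explicitly that at the nodes the values $0$ and $1$ coincide with the limits of the linear piece. Once that is checked, the argument is a direct verification with no real obstacle. The concluding sentence, that $K$ lies in the convex hull of $\mathcal H_n$, follows because $\lambda\in\Delta^{n-1}$.
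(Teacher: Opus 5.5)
Your proposal is correct and follows essentially the same argument as the paper's proof. The weights telescope; on each cell $[t_{j-1},t_j]$ the ramps with $k<j$ equal $1$ and those with $k>j$ vanish, so the sum reduces to the linear interpolant of $K$, which equals $K$ by piecewise linearity — and your extra check of the node values of $H_j$ just makes explicit a detail the paper leaves implicit.
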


\begin{proof}
Monotonicity of $K$ gives $\lambda_k=K(t_k)-K(t_{k-1})\ge0$, and the sum
telescopes to $K(1)-K(0)=1$. Let $u\in[t_{i-1},t_i]$. Then $H_k(u)=1$ for
$k<i$, $H_k(u)=0$ for $k>i$, and $H_i(u)=(u-t_{i-1})/(t_i-t_{i-1})$, so
\[
\sum_{k=1}^n\lambda_kH_k(u)
=\sum_{k=1}^{i-1}\lambda_k+\lambda_i\,\frac{u-t_{i-1}}{t_i-t_{i-1}}
=K(t_{i-1})+\bigl(K(t_i)-K(t_{i-1})\bigr)\frac{u-t_{i-1}}{t_i-t_{i-1}},
\]
which is the linear interpolation of $K$ on $[t_{i-1},t_i]$. Since $K$ is
piecewise linear with respect to the partition, the representation holds for
all $u\in[0,1]$.
\end{proof}

Each ramp $H_k$ is a local geometric activation on the probability interval
$[t_{k-1},t_k]$: it concentrates its entire unit deformation there and is
constant elsewhere. The coefficient $\lambda_k$ measures how much of the total
deformation of $K$ is allocated to $[t_{k-1},t_k]$. The decomposition is
illustrated in Figure~\ref{fig:ramp-dictionary}.

\begin{figure}[t]
\centering
\begin{tikzpicture}[scale=1.1, >=Latex]
\draw[->] (0,0) -- (3.2,0) node[right] {$u$};
\draw[->] (0,0) -- (0,2.8) node[above] {$H_k(u)$};
\node at (1.8,3.2) {\small Ramp dictionary};
\foreach \x/\label in {0/0,1/ t_1,2/ t_2,3/1}
{
    \draw (\x,0) -- (\x,-0.08);
    \node[below] at (\x,0) {\small $\label$};
}
\draw[thick,blue] (0,0) -- (1,1) (1,1) -- (3,1);
\draw[thick,red] (0,0) -- (1,0) (1,0) -- (2,1) (2,1) -- (3,1);
\draw[thick,green!60!black] (0,0) -- (2,0) (2,0) -- (3,1);
\node[blue] at (0.5,1.3) {\small $H_1$};
\node[red] at (1.5,1.3) {\small $H_2$};
\node[green!60!black] at (2.5,1.3) {\small $H_3$};
\draw[->, thick] (3.8,1.3) -- (4.8,1.3) node[midway, above] {\small $\lambda$};
\draw[->] (5.2,0) -- (8.4,0) node[right] {$u$};
\draw[->] (5.2,0) -- (5.2,2.8) node[above] {$K(u)$};
\node at (7.1,3.2) {\small Combined geometry};
\foreach \x/\label in {5.2/0,6.2/ t_1,7.2/ t_2,8.2/1}
{
    \draw (\x,0) -- (\x,-0.08);
    \node[below] at (\x,0) {\small $\label$};
}
\draw[thick,black] (5.2,0) -- (6.2,0.4) -- (7.2,0.75) -- (8.2,1);
\node at (7.2,1.6) {\small $K(u)=\sum \lambda_k H_k(u)$};
\end{tikzpicture}
\caption{Ramp dictionary representation of probability geometries. Each
elementary ramp $H_k$ activates on one probability interval and remains
constant thereafter. A deformation map $K$ is obtained as a convex combination
of the ramps, with coefficients $\lambda_k$ determining where the geometry
concentrates its deformation.}
\label{fig:ramp-dictionary}
\end{figure}

\subsection{Admissibility and the interior of the simplex}

The individual ramps are \emph{not} admissible: each $H_k$ is constant outside
its activation interval, so no vertex of the simplex generates a chart. The
admissible geometries fill the interior.

\begin{proposition}[Admissible geometries are the relative interior]
\label{prop:interior}
Let $\mathcal H_n$ be the ramp dictionary of a partition
$0=t_0<\dots<t_n=1$ and let $K_\lambda=\sum_k\lambda_kH_k$,
$\lambda\in\Delta^{n-1}$. Then $K_\lambda\in\mathcal K^\circ$ if and only if
$\lambda_k>0$ for every $k$. Consequently, the admissible piecewise-linear
deformations subordinate to the partition correspond exactly to the relative
interior of the simplex $\Delta^{n-1}$.
\end{proposition}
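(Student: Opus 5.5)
The plan is to read off the behaviour of $K_\lambda$ on each partition interval directly from the computation in the proof of Theorem~\ref{thm:ramp}. By Proposition~\ref{prop:convexity}, $K_\lambda\in\mathcal K$ for every $\lambda\in\Delta^{n-1}$, so admissibility reduces to strict monotonicity. For $u\in[t_{i-1},t_i]$ the same telescoping argument gives
\[
K_\lambda(u)=\sum_{k<i}\lambda_k+\lambda_i\,\frac{u-t_{i-1}}{t_i-t_{i-1}} .
\]
Hence $K_\lambda$ is affine on $[t_{i-1},t_i]$ with slope $\lambda_i/(t_i-t_{i-1})$, and $K_\lambda(t_i)-K_\lambda(t_{i-1})=\lambda_i$. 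Note that Proposition~\ref{prop:convexity} cannot supply strictness here, since no $H_k$ is itself admissible.

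\emph{Sufficiency.} Suppose every $\lambda_k>0$. Then $K_\lambda$ has strictly positive slope on each closed partition interval. Take $u<v$. If they lie in the same interval, strict increase follows from the positive slope. Otherwise, let $u\in[t_{i-1},t_i]$ and $v\in[t_{j-1},t_j]$ with $i<j$. Chaining the increments gives
\[
K_\lambda(v)-K_\lambda(u)\ge K_\lambda(t_i)-K_\lambda(u)>0,
\]
provided $u<t_i$. If instead $u=t_i$, apply the same argument starting from the next interval. So $K_\lambda\in\mathcal K^\circ$.

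\emph{Necessity.} Suppose $\lambda_i=0$ for some $i$. By the formula above, $K_\lambda$ is constant on the nondegenerate interval $[t_{i-1},t_i]$, since $t_{i-1}<t_i$. Therefore $K_\lambda$ is not strictly increasing, and $K_\lambda\notin\mathcal K^\circ$.

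The final identification uses the fact that the relative interior of $\Delta^{n-1}$ is exactly $\{\lambda:\lambda_k>0\ \forall k\}$. Combined with Theorem~\ref{thm:ramp}, every piecewise-linear $K\in\mathcal K$ subordinate to the partition equals $K_\lambda$ with $\lambda_k=K(t_k)-K(t_{k-1})$. This map is injective, because $\lambda$ is recovered from the values of $K_\lambda$ at the nodes. So admissible piecewise-linear deformations correspond bijectively to the relative interior.

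There is no real obstacle. The only point needing care is the bookkeeping at the nodes when $u$ and $v$ lie in different intervals, and that is handled by summing the strictly positive increments $\lambda_k$.
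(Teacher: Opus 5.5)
Your proof is correct and follows the paper's argument: $K_\lambda$ is affine on each $[t_{k-1},t_k]$ with slope $\lambda_k/(t_k-t_{k-1})$, so all slopes positive gives sufficiency, a flat piece gives necessity, and Theorem~\ref{thm:ramp} gives the identification; your chaining across nodes and your injectivity check of $\lambda\mapsto K_\lambda$ via nodal values only spell out what the paper leaves implicit. One harmless slip: the aside that no $H_k$ is admissible fails for $n=1$, where $H_1(u)=u$, but nothing in your argument depends on it.
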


\begin{proof}
On $(t_{k-1},t_k)$ the map $K_\lambda$ is affine with slope
$\lambda_k/(t_k-t_{k-1})$. If every $\lambda_k>0$, all slopes are positive and
$K_\lambda$ is strictly increasing. If $\lambda_k=0$ for some $k$, then
$K_\lambda$ is constant on $[t_{k-1},t_k]$ and strict monotonicity fails. The
last claim follows from Theorem~\ref{thm:ramp}, under which
$\lambda\mapsto K_\lambda$ identifies $\Delta^{n-1}$ with the monotone
piecewise-linear deformations on the partition.
\end{proof}

\begin{remark}[Boundary allocations as degenerate geometries]
Allocations on the boundary of the simplex are still monotone deformation maps
and generate monotone non-decreasing images $K_\lambda\circ F$, but these fail
strict monotonicity and are not charts: the corresponding ``geometry''
collapses entire probability intervals to single points, and the pullback
$G_\lambda^{-1}$, hence the barycenter, is undefined. The inferential theory of
\cite{Cojocea2026b} requires charts; accordingly, in everything that follows,
statements about barycenters implicitly restrict to interior allocations. This
is the precise sense in which the strictness requirement of
Definition~\ref{def:deformation} is not a technicality.
\end{remark}

\subsection{Geometric intensity}

The ramp coefficients admit an intensity reading which will guide the examples.
When $K$ is differentiable, the derivative $K'(u)$ measures the local rate at
which deformation accumulates at probability level $u$: we call it the
\emph{geometric intensity} of the deformation. Regions where $K'$ is large
receive magnified coordinate resolution; regions where $K'$ is small are
compressed. See Figure~\ref{fig:geometric-intensity}.

More generally, any $K\in\mathcal K$ defines a probability measure $dK$ on
$[0,1]$ through its increments, and
\[
K(u)=\int_0^u dK(s),
\]
so a probability geometry is completely described by how this
\emph{deformation measure} distributes mass across probability space. In the
piecewise-linear case the ramp weights are exactly the masses
\[
\lambda_k=dK\bigl([t_{k-1},t_k]\bigr),
\]
and Theorem~\ref{thm:ramp} is the statement that the geometry is determined by
a discretization of its intensity. The identity deformation $K(u)=u$ has
uniform intensity: it is the canonical geometry, under which $G=F$ and the
probability coordinates are undeformed.

\begin{figure}[t]
\centering
\begin{tikzpicture}[scale=1.05, >=Latex]
\draw[->, thick] (0,0) -- (4.5,0) node[right] {$u$};
\draw[->, thick] (0,0) -- (0,3.6) node[above] {$K(u)$};
\node[font=\small] at (2,4.1) {\textbf{Probability geometry}};
\foreach \x/\label in {0/0,1.33/{t_1},2.66/{t_2},4/{1}}
{
    \draw (\x,0) -- (\x,-0.08);
    \node[below] at (\x,0) {\small $\label$};
}
\fill[blue!8] (0,0) rectangle (1.33,3.3);
\fill[orange!10] (1.33,0) rectangle (2.66,3.3);
\fill[red!8] (2.66,0) rectangle (4,3.3);
\draw[line width=1.8pt, color=violet]
(0,0) -- (1.33,0.45) -- (2.66,1.1) -- (4,3);
\filldraw[violet] (0,0) circle (1.2pt);
\filldraw[violet] (1.33,0.45) circle (1.2pt);
\filldraw[violet] (2.66,1.1) circle (1.2pt);
\filldraw[violet] (4,3) circle (1.2pt);
\node[violet] at (2.6,2.6) {\small $K(u)$};
\node[blue!70!black] at (0.65,3.0) {\small weak};
\node[orange!80!black] at (2.0,3.0) {\small moderate};
\node[red!70!black] at (3.3,3.0) {\small strong};
\draw[->, line width=1.3pt] (5.0,1.7) -- (6.2,1.7);
\node at (5.6,2.15) {\small derivative};
\node at (5.6,1.3) {\small geometric intensity};
\draw[->, thick] (7,0) -- (11.5,0) node[right] {$u$};
\draw[->, thick] (7,0) -- (7,3.6) node[above] {$K'(u)$};
\node[font=\small] at (9.2,4.1) {\textbf{Geometric intensity}};
\foreach \x/\label in {7/0,8.33/{t_1},9.66/{t_2},11/{1}}
{
    \draw (\x,0) -- (\x,-0.08);
    \node[below] at (\x,0) {\small $\label$};
}
\fill[blue!8] (7,0) rectangle (8.33,3.3);
\fill[orange!10] (8.33,0) rectangle (9.66,3.3);
\fill[red!8] (9.66,0) rectangle (11,3.3);
\draw[line width=2pt, color=blue!70!black] (7,0.35) -- (8.33,0.35);
\draw[line width=2pt, color=orange!90!black] (8.33,0.55) -- (9.66,0.55);
\draw[line width=2pt, color=red!75!black] (9.66,1.8) -- (11,1.8);
\draw[dashed,gray] (8.33,0) -- (8.33,1.8);
\draw[dashed,gray] (9.66,0) -- (9.66,1.8);
\node[blue!70!black] at (7.65,0.75) {\small low intensity};
\node[orange!90!black] at (9.0,0.95) {\small medium};
\node[red!75!black] at (10.35,2.1) {\small high intensity};
\end{tikzpicture}
\caption{Geometric intensity associated with a deformation map. The map $K$
redistributes coordinate resolution across probability space; its derivative
$K'$ measures the local intensity of the redistribution. Large values of $K'$
mark probability regions where the geometry concentrates.}
\label{fig:geometric-intensity}
\end{figure}

\section{Approximation of admissible deformations}
\label{sec:approx}

Throughout this section $\dinf(K,\widetilde K)=\sup_{u\in[0,1]}
|K(u)-\widetilde K(u)|$ denotes the uniform distance.

\subsection{Density of piecewise-linear geometries}

\begin{proposition}[Density]\label{prop:density}
Let $K\in\mathcal K$ and let $(\pi_n)$ be a sequence of partitions of $[0,1]$
with mesh tending to zero. Let $K^{(n)}$ be the piecewise-linear interpolant of
$K$ at the nodes of $\pi_n$. Then $K^{(n)}\in\mathcal K$, and
\[
\dinf\bigl(K^{(n)},K\bigr)\longrightarrow0 .
\]
If moreover $K\in\mathcal K^\circ$, then every $K^{(n)}\in\mathcal K^\circ$.
\end{proposition}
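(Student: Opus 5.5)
The proof splits into three parts: membership $K^{(n)}\in\mathcal K$, uniform convergence, and preservation of admissibility. Write $\pi_n=\{0=t_0<t_1<\dots<t_{N}=1\}$ (the indices depend on $n$, suppressed) and let $K^{(n)}$ be the continuous function agreeing with $K$ at each $t_k$ and affine on each $[t_{k-1},t_k]$. By Theorem~\ref{thm:ramp}, or directly from its proof, we may write
\[
K^{(n)}=\sum_{k=1}^{N}\lambda_kH_k,\qquad \lambda_k=K(t_k)-K(t_{k-1}).
\]
Monotonicity of $K$ gives $\lambda_k\ge0$, and the sum telescopes to $K(1)-K(0)=1$. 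Each ramp $H_k$ lies in $\mathcal K$, so Proposition~\ref{prop:convexity} yields $K^{(n)}\in\mathcal K$. Equivalently, $K^{(n)}$ is continuous, has nonnegative slope on every cell, and keeps the endpoint values $0$ and $1$.

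For convergence, the only analytic input is that $K$ is continuous on the compact interval $[0,1]$, hence uniformly continuous. Let $\omega(\delta)=\sup\{|K(u)-K(v)|:|u-v|\le\delta\}$ be its modulus of continuity, so $\omega(\delta)\to0$ as $\delta\to0$. Fix $u\in[t_{k-1},t_k]$. Since $K^{(n)}(u)$ is a convex combination of $K(t_{k-1})$ and $K(t_k)$, and $K$ is monotone, both $K(u)$ and $K^{(n)}(u)$ lie in the interval $[K(t_{k-1}),K(t_k)]$. Therefore
\[
|K^{(n)}(u)-K(u)|\le K(t_k)-K(t_{k-1})\le\omega(\operatorname{mesh}\pi_n).
\]
Taking the supremum over $u$ gives $\dinf(K^{(n)},K)\le\omega(\operatorname{mesh}\pi_n)\to0$.

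For admissibility, suppose $K\in\mathcal K^\circ$. Then strict monotonicity gives $\lambda_k=K(t_k)-K(t_{k-1})>0$ for every $k$, so Proposition~\ref{prop:interior} gives $K^{(n)}\in\mathcal K^\circ$.

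None of the steps is a real obstacle. The one point needing care is the convergence step: we want the mesh-dependent bound with no regularity on $K$ beyond continuity. Monotonicity supplies this, because it traps $K$ and its interpolant in the same interval $[K(t_{k-1}),K(t_k)]$ on each cell. This avoids the usual $2\omega$ argument, though that argument would also suffice.
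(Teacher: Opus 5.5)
Your proof is correct and follows the paper's argument: the key step, trapping both $K(u)$ and $K^{(n)}(u)$ in $[K(t_{i-1}),K(t_i)]$ on each cell to get $\dinf(K^{(n)},K)\le\omega_K(|\pi_n|)$, is exactly the paper's. The only difference is cosmetic. You get membership and admissibility by writing $K^{(n)}$ as a ramp combination and invoking Propositions~\ref{prop:convexity} and~\ref{prop:interior}, whereas the paper simply notes that interpolation preserves (strict) monotonicity and endpoint values. If you take the ramp route, rely on the computation in the proof of Theorem~\ref{thm:ramp} (or on your direct slope remark) rather than on its statement, since the statement already assumes $K^{(n)}\in\mathcal K$.
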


\begin{proof}
Interpolation preserves monotonicity, endpoint values, and continuity, so
$K^{(n)}\in\mathcal K$; if $K$ is strictly increasing its nodal values are
strictly increasing, hence so is the interpolant. For the uniform convergence,
let $\omega_K$ be the modulus of continuity of $K$ (finite and vanishing at $0$
since $K$ is continuous on a compact interval). On each cell $[t_{i-1},t_i]$ of
$\pi_n$, both $K$ and $K^{(n)}$ take values in
$[K(t_{i-1}),K(t_i)]$---for $K$ by monotonicity, for $K^{(n)}$ because it
interpolates the endpoint values linearly---so
$|K^{(n)}(u)-K(u)|\le K(t_i)-K(t_{i-1})\le\omega_K(|\pi_n|)$, where $|\pi_n|$
is the mesh. Hence $\dinf(K^{(n)},K)\le\omega_K(|\pi_n|)\to0$.
\end{proof}

Combining Proposition~\ref{prop:density} with Theorem~\ref{thm:ramp}: every
admissible probability geometry is a uniform limit of geometries generated by
finite ramp dictionaries with interior allocation vectors. The
infinite-dimensional class of charts on an interval is thus approximated by
finite-dimensional convex simplices, at the price of a partition.

\begin{remark}[Rate for smooth deformations]\label{rem:rate}
If $K\in C^2[0,1]$, the classical interpolation bound gives
\[
\dinf\bigl(K^{(n)},K\bigr)\le\tfrac18\,\|K''\|_\infty\,|\pi_n|^2 ,
\]
so ramp interpolation on $n$ uniform cells converges at rate $n^{-2}$. This
rate is visible in the numerical study of Section~\ref{sec:numerics}.
\end{remark}

\subsection{Bernstein approximation: a smooth ramp dictionary}

Piecewise-linear geometries are not differentiable at the knots. A smooth
constructive scheme is provided by the Bernstein operator
\[
(B_nK)(u)=\sum_{j=0}^n K\!\left(\frac{j}{n}\right)\binom{n}{j}u^j(1-u)^{n-j},
\qquad u\in[0,1],
\]
see \cite{Lorentz1986}. Two structural properties make it exactly suited to the
present framework: it regularizes arbitrary monotone deformations into
admissible ones, and it is itself a ramp-dictionary representation---with
smooth atoms.

\begin{theorem}[Bernstein approximants of deformation maps]\label{thm:bernstein}
Let $K\in\mathcal K$ and $n\ge1$. Then:
\begin{enumerate}[label=(\alph*)]
\item $B_nK\in\mathcal K^\circ$: the Bernstein approximant of \emph{any}
monotone deformation map is an admissible deformation map;
\item $\dinf(B_nK,K)\to0$ as $n\to\infty$;
\item $B_nK$ admits the exact convex representation
\[
(B_nK)(u)=\sum_{k=1}^n\lambda_k\,J_{n,k}(u),
\qquad
\lambda_k=K\!\left(\frac{k}{n}\right)-K\!\left(\frac{k-1}{n}\right),
\]
where
\[
J_{n,k}(u)=\sum_{j=k}^n\binom{n}{j}u^j(1-u)^{n-j}
\]
is the distribution function of the $k$-th order statistic of $n$ independent
uniform variables, i.e.\ of the $\mathrm{Beta}(k,\,n-k+1)$ law. In particular
$\{J_{n,1},\dots,J_{n,n}\}\subset\mathcal K^\circ$ is a dictionary of smooth
admissible deformation maps, and the Bernstein scheme allocates to its $k$-th
atom the same weight that Theorem~\ref{thm:ramp} allocates to the ramp on
$[\tfrac{k-1}{n},\tfrac{k}{n}]$.
\end{enumerate}
\end{theorem}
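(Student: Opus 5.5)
I would prove part (c) first, since (a) follows from it together with the convexity results of Section~\ref{sec:deformation}. Write $b_{n,j}(u)=\binom{n}{j}u^j(1-u)^{n-j}$ and $K_j=K(j/n)$. Since $K(0)=0$, each nodal value telescopes as $K_j=\sum_{k=1}^{j}\lambda_k$ with $\lambda_k=K_k-K_{k-1}$. Substituting this into $(B_nK)(u)=\sum_{j=0}^n K_j\,b_{n,j}(u)$ and exchanging the order of summation over the region $1\le k\le j\le n$ gives
\[
(B_nK)(u)=\sum_{k=1}^n\lambda_k\sum_{j=k}^n b_{n,j}(u)=\sum_{k=1}^n\lambda_k J_{n,k}(u).
\]
Monotonicity of $K$ gives $\lambda_k\ge0$, and the telescoping sum gives $\sum_k\lambda_k=K(1)-K(0)=1$, exactly as in the proof of Theorem~\ref{thm:ramp}. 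The weights therefore coincide with the ramp weights on the uniform partition.

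For the identification of $J_{n,k}$, the event $\{U_{(k)}\le u\}$ means that at least $k$ of the $n$ uniforms fall in $[0,u]$. This count is $\mathrm{Binomial}(n,u)$, so $\Prob(U_{(k)}\le u)=J_{n,k}(u)$, which is the $\mathrm{Beta}(k,n-k+1)$ distribution function. I then check that $J_{n,k}\in\mathcal K^\circ$:
\begin{itemize}[label={}]
\item it is a polynomial, hence continuous;
\item for $k\ge1$ it satisfies $J_{n,k}(0)=0$ and $J_{n,k}(1)=b_{n,n}(1)=1$;
\item it is strictly increasing.
\end{itemize}
Strict monotonicity is the one step that needs an actual computation rather than bookkeeping. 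I would use the standard derivative identity
\[
J_{n,k}'(u)=n\binom{n-1}{k-1}u^{k-1}(1-u)^{n-k},
\]
which is the Beta density, up to normalisation. It can be obtained by differentiating the binomial tail and telescoping, or simply by quoting the Beta-law identification just established. This derivative is positive on $(0,1)$, so $J_{n,k}$ is strictly increasing on $[0,1]$.

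Part (a) then follows from Proposition~\ref{prop:convexity}. Because $\sum_k\lambda_k=1$, at least one $\lambda_k$ is positive, and the corresponding atom $J_{n,k}$ lies in $\mathcal K^\circ$. Hence $B_nK\in\mathcal K^\circ$ for every $K\in\mathcal K$, even a non-admissible one. This argument holds for every $n\ge1$.

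For (b), I would invoke the classical Bernstein (Weierstrass) theorem \cite{Lorentz1986}: $B_n f\to f$ uniformly for every continuous $f$ on $[0,1]$. If a self-contained argument is preferred, the route is the following. The operator reproduces $1$ and $u$ and satisfies $B_n(u^2)=u^2+u(1-u)/n$. Splitting $|B_nK(u)-K(u)|\le\sum_j|K_j-K(u)|\,b_{n,j}(u)$ into the nodes with $|j/n-u|<\delta$ and those with $|j/n-u|\ge\delta$, and applying Chebyshev's inequality to the second group, gives the bound $\omega_K(\delta)+1/(4n\delta^2)$, using $|K|\le1$. Choosing $\delta=n^{-1/3}$ makes both terms tend to zero. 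Overall, the only point I expect to need care is the summation exchange and the derivative identity; everything else reduces to earlier results.
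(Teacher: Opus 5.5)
Your proposal is correct and follows essentially the same route as the paper: Abel summation using $K(0)=0$ for (c), the binomial--beta duality together with the positive derivative $n\binom{n-1}{k-1}u^{k-1}(1-u)^{n-k}=k\binom{n}{k}u^{k-1}(1-u)^{n-k}$ for admissibility of the atoms, Proposition~\ref{prop:convexity} for (a), and the classical Bernstein theorem for (b). Your optional Chebyshev argument for (b) is a valid self-contained supplement that the paper merely cites; the bound $\omega_K(\delta)+1/(4n\delta^2)$ holds because $0\le K\le 1$ gives $|K(j/n)-K(u)|\le 1$.
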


\begin{proof}
(c) Since $K(0)=0$, Abel summation gives, with
$\lambda_k=K(k/n)-K((k-1)/n)$,
\[
\sum_{j=0}^nK\!\left(\frac jn\right)b_{n,j}(u)
=\sum_{j=1}^n\Bigl(\sum_{k=1}^j\lambda_k\Bigr)b_{n,j}(u)
=\sum_{k=1}^n\lambda_k\sum_{j=k}^n b_{n,j}(u)
=\sum_{k=1}^n\lambda_k J_{n,k}(u),
\]
where $b_{n,j}(u)=\binom nju^j(1-u)^{n-j}$. The identification of $J_{n,k}$ as
the $\mathrm{Beta}(k,n-k+1)$ distribution function is the classical
binomial--beta duality: for the $k$-th uniform order statistic one has
$\sum_{j\ge k}b_{n,j}(u)=\Prob(U_{(k)}\le u)$ \cite{ShorackWellner}. Each
$J_{n,k}$ is a polynomial with $J_{n,k}(0)=0$, $J_{n,k}(1)=1$, and derivative
$k\binom nk u^{k-1}(1-u)^{n-k}>0$ on $(0,1)$, hence
$J_{n,k}\in\mathcal K^\circ$.

(a) Endpoint interpolation gives $(B_nK)(0)=K(0)=0$ and $(B_nK)(1)=K(1)=1$.
By (c), $B_nK$ is a convex combination of the admissible maps $J_{n,k}$ with
weights $\lambda_k\ge0$ summing to $K(1)-K(0)=1$; at least one $\lambda_k$ is
strictly positive, so $B_nK\in\mathcal K^\circ$ by
Proposition~\ref{prop:convexity}. (Directly:
$(B_nK)'(u)=n\sum_{j=0}^{n-1}\bigl[K(\tfrac{j+1}{n})-K(\tfrac
jn)\bigr]b_{n-1,j}(u)>0$ on $(0,1)$.)

(b) is the classical uniform convergence of Bernstein polynomials for
continuous functions \cite{Lorentz1986,Rudin}.
\end{proof}

\begin{remark}[Regularization]\label{rem:regularization}
Part (a) deserves emphasis: even when $K$ has flat pieces---for instance a
boundary allocation $K_\lambda$ with some $\lambda_k=0$, which is not a
chart-generating map---its Bernstein approximant is strictly increasing and
therefore admissible. The Bernstein operator maps the full simplex into the
class of admissible geometries, repairing degeneracy while changing the map by
at most $\dinf(B_nK,K)$. For smooth $K$ the price is a slower rate: by
Voronovskaya's theorem the error is of exact order $n^{-1}$
\cite{Lorentz1986}, against $n^{-2}$ for ramp interpolation
(Remark~\ref{rem:rate}). The numerical study of Section~\ref{sec:numerics}
displays both rates.
\end{remark}

\begin{remark}[Monotone splines]\label{rem:splines}
The same construction applies to monotone spline bases: integrated M-splines
(I-splines) are non-decreasing, endpoint-normalized basis functions whose
non-negative combinations are monotone splines \cite{Ramsay1988}, and
normalized allocation vectors again produce deformation maps with the interior
corresponding to admissible geometries. Ramps are first-order I-splines; the
beta atoms $J_{n,k}$ play the analogous role in the polynomial world. Any such
system yields a finite-dimensional convex family of probability geometries
with the transfer properties of Section~\ref{sec:transfer}.
\end{remark}

\section{Transfer to probability geometry}
\label{sec:transfer}

Approximation of deformation maps would be of limited interest if it did not
control the geometric functionals built from the charts. This section shows
that it does: uniform closeness of deformations transfers, first to charts,
then to probability barycenters and initial Kolmogorov moments.

\subsection{Uniform approximation of charts}

\begin{proposition}[Chart transfer]\label{prop:chart-transfer}
Let $F$ be a chart on $I$ and $K,\widetilde K\in\mathcal K$. Then
\[
\sup_{x\in I}\,\bigl|(K\circ F)(x)-(\widetilde K\circ F)(x)\bigr|
\;\le\;
\dinf\bigl(K,\widetilde K\bigr),
\]
with equality when $F$ is surjective onto $(0,1)$, as it is for charts.
In particular, if $\dinf(K^{(n)},K)\to0$ then
$K^{(n)}\circ F\to K\circ F$ uniformly on $I$.
\end{proposition}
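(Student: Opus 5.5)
The plan is a change of variables $u=F(x)$, followed by a check of what happens at the endpoints of $[0,1]$. For each $x\in I$ we have $F(x)\in(0,1)\subset[0,1]$, so
\[
\bigl|(K\circ F)(x)-(\widetilde K\circ F)(x)\bigr|
=\bigl|K(F(x))-\widetilde K(F(x))\bigr|
\le\sup_{u\in[0,1]}|K(u)-\widetilde K(u)|=\dinf(K,\widetilde K).
\]
Taking the supremum over $x\in I$ gives the inequality. It uses nothing beyond $F(I)\subseteq[0,1]$, so it holds for arbitrary $K,\widetilde K\in\mathcal K$, admissible or not.

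For equality I would first use that a chart is a bijection of $I$ onto $(0,1)$, which is recorded after Definition~\ref{def:chart} and in the proof of Proposition~\ref{prop:correspondence}(b). As $x$ ranges over $I$, $u=F(x)$ ranges over all of $(0,1)$, and therefore
\[
\sup_{x\in I}|(K\circ F)(x)-(\widetilde K\circ F)(x)|=\sup_{u\in(0,1)}|K(u)-\widetilde K(u)|.
\]
The only remaining gap is between the supremum over the open interval $(0,1)$ and the one over the closed interval $[0,1]$. Here the endpoint normalization does the work: since $K(0)=\widetilde K(0)=0$ and $K(1)=\widetilde K(1)=1$, the difference $K-\widetilde K$ vanishes at both endpoints. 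So the supremum over $[0,1]$ is either $0$, in which case equality is trivial, or it is approached at interior points. Continuity of $K-\widetilde K$ gives the same conclusion independently: a continuous function has the same supremum on $(0,1)$ as on its closure. Either argument yields $\sup_{(0,1)}=\dinf(K,\widetilde K)$, hence equality.

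The convergence statement follows by applying the inequality with $\widetilde K=K^{(n)}$. This gives $\sup_I|K^{(n)}\circ F-K\circ F|\le\dinf(K^{(n)},K)\to0$, which is uniform convergence on $I$. Combined with Proposition~\ref{prop:density} and Theorem~\ref{thm:bernstein}(b), it applies to both the ramp and the Bernstein approximants.

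There is no substantive obstacle. The one point needing care is the passage from $(0,1)$ to $[0,1]$ in the equality case, and I would handle it by the endpoint observation above rather than relying on $F$ attaining $0$ or $1$, which it does not.
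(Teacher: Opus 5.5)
Your proof is correct and follows essentially the same route as the paper: substitute $u=F(x)$, use $F(I)=(0,1)$, and pass from $(0,1)$ to $[0,1]$ using the continuity of $K-\widetilde K$ and the density of $(0,1)$ in $[0,1]$. Your alternative endpoint argument is also valid and slightly more economical: $K-\widetilde K$ vanishes at $0$ and $1$ for any $K,\widetilde K\in\mathcal K$, so the endpoints cannot raise the supremum, and continuity is not needed for that step.
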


\begin{proof}
For $x\in I$, $|(K\circ F)(x)-(\widetilde K\circ F)(x)|
=|K(u)-\widetilde K(u)|$ with $u=F(x)\in(0,1)$; take suprema and use the
continuity of $K-\widetilde K$ on $[0,1]$ together with the density of
$F(I)=(0,1)$ in $[0,1]$.
\end{proof}

\subsection{Stability of probability barycenters}

\begin{theorem}[Barycenter stability]\label{thm:barycenter-stability}
Let $F$ be a chart on $I$, let $K\in\mathcal K^\circ$ with $G=K\circ F$, and
let $X$ be an $I$-valued random variable. Let $(K_n)\subset\mathcal K^\circ$
with $\varepsilon_n:=\dinf(K_n,K)\to0$, and set $G_n=K_n\circ F$. Then:
\begin{enumerate}[label=(\alph*)]
\item $\bigl|\E[G_n(X)]-\E[G(X)]\bigr|\le\varepsilon_n$;
\item $G\bigl(b_{G_n}(X)\bigr)\to G\bigl(b_G(X)\bigr)$, with
$\bigl|G(b_{G_n}(X))-G(b_G(X))\bigr|\le2\varepsilon_n$;
\item $b_{G_n}(X)\to b_G(X)$;
\item if in addition $G$ is differentiable on a neighbourhood of $b_G(X)$ with
derivative bounded below by $\gamma>0$ there, then for all $n$ large enough
\[
\bigl|b_{G_n}(X)-b_G(X)\bigr|\;\le\;\frac{2\varepsilon_n}{\gamma}.
\]
\end{enumerate}
\end{theorem}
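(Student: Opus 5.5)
Write $U=F(X)\in(0,1)$, $m=\E[K(U)]=\E[G(X)]$ and $m_n=\E[K_n(U)]=\E[G_n(X)]$. Both $m$ and $m_n$ lie in $(0,1)$, because $K$ and $K_n$ are strictly increasing with $K(0)=0$ and $K(1)=1$, so $K(U),K_n(U)\in(0,1)$ almost surely. The barycenters are then
\[
b_G(X)=F^{-1}(K^{-1}(m)),\qquad b_{G_n}(X)=F^{-1}(K_n^{-1}(m_n)).
\]
I work through the four items in order, keeping the whole argument in probability coordinates until the last step.

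For (a), Proposition~\ref{prop:chart-transfer} gives $|G_n(X)-G(X)|\le\varepsilon_n$ pointwise. Taking expectations yields $|m_n-m|\le\varepsilon_n$.

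For (b), set $u_n=K_n^{-1}(m_n)\in(0,1)$, so that $G(b_{G_n}(X))=K(u_n)$ and $G(b_G(X))=m$. I would then use the triangle inequality:
\[
|K(u_n)-m|\le|K(u_n)-K_n(u_n)|+|K_n(u_n)-m| .
\]
The first term is at most $\varepsilon_n$ by the definition of $\dinf$. The second term equals $|m_n-m|$, since $K_n(u_n)=m_n$, and so it is at most $\varepsilon_n$ by (a). Together these give the bound $2\varepsilon_n$.

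Step (c) needs continuity of $G^{-1}$ at the point $m\in(0,1)$. Since $G$ is a continuous strictly increasing bijection $I\to(0,1)$, its inverse $G^{-1}:(0,1)\to I$ is continuous. Because $G(b_{G_n}(X))\to m$ and $m$ is interior, the values eventually lie in a compact subinterval of $(0,1)$. Applying $G^{-1}$ then gives $b_{G_n}(X)=G^{-1}(G(b_{G_n}(X)))\to G^{-1}(m)=b_G(X)$.

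For (d), let $N$ be the neighbourhood of $b_G(X)$ on which $G'\ge\gamma$, and shrink it to a closed interval $J\subset N$ containing $b_G(X)$ in its interior. By (c), $b_{G_n}(X)\in J$ for all large $n$. The mean value theorem on the segment between $b_{G_n}(X)$ and $b_G(X)$, which lies in $J$, gives
\[
|G(b_{G_n}(X))-G(b_G(X))|\ge\gamma\,|b_{G_n}(X)-b_G(X)| .
\]
Combining this with (b) yields the bound $2\varepsilon_n/\gamma$.

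The main point of care is (c)–(d). The estimate lives in $G$-coordinates, and pulling it back to value space requires knowing first that the iterates enter the neighbourhood where the derivative bound holds. That localization is exactly what (c) supplies, and it is the reason the final bound holds only for $n$ large enough. The rest is a routine triangle inequality.
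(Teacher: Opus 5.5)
Your proof is correct and follows the paper's argument essentially step by step: (a) from the pointwise chart-transfer bound, (b) from the same triangle inequality $|G(b_n)-G_n(b_n)|+|m_n-m|$, which you simply write in the coordinate $u_n=K_n^{-1}(m_n)$, (c) from continuity of $G^{-1}$ at $m\in(0,1)$, and (d) by localizing via (c) and applying the mean value theorem. Your remark that the values eventually lie in a compact subinterval is harmless but not needed, since continuity of $G^{-1}$ at the interior point $m$ already gives (c).
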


\begin{proof}
(a) $|\E[G_n(X)]-\E[G(X)]|\le\E|K_n(F(X))-K(F(X))|\le\varepsilon_n$.

(b) Write $m=\E[G(X)]$, $m_n=\E[G_n(X)]$, and $b_n=b_{G_n}(X)$, so that
$G_n(b_n)=m_n$ and $G(b_G(X))=m$. Then
\[
\bigl|G(b_n)-m\bigr|
\le\bigl|G(b_n)-G_n(b_n)\bigr|+\bigl|m_n-m\bigr|
\le\varepsilon_n+\varepsilon_n=2\varepsilon_n,
\]
using Proposition~\ref{prop:chart-transfer} for the first term and (a) for the
second.

(c) By \cite{Cojocea2026a}, $m\in(0,1)$, and $G^{-1}:(0,1)\to I$ is continuous
because $G$ is a continuous strictly increasing bijection. By (b),
$G(b_n)\to m$, hence $b_n=G^{-1}(G(b_n))\to G^{-1}(m)=b_G(X)$.

(d) By (c), $b_n$ eventually lies in the neighbourhood where $G'\ge\gamma$;
the mean value theorem applied to $G$ between $b_n$ and $b_G(X)$ gives
$2\varepsilon_n\ge|G(b_n)-G(b_G(X))|\ge\gamma\,|b_n-b_G(X)|$.
\end{proof}

\begin{remark}
No differentiability, moment condition, or tail assumption on $X$ enters parts
(a)--(c): stability of the barycenter is purely a consequence of the
boundedness of probability coordinates, in line with the robustness theme of
\cite{Cojocea2026b}. Note also that the bound in (d) controls, but does not
force, the error: the actual barycenter error can be far below
$2\varepsilon_n/\gamma$ and need not decrease monotonically in $n$, since the
signed error may cross zero. Both effects are visible in
Table~\ref{tab:errors}.
\end{remark}

\subsection{Stability of initial Kolmogorov moments}

\begin{proposition}[Moment transfer]\label{prop:moment-transfer}
In the setting of Theorem~\ref{thm:barycenter-stability}, let $r\ge1$. Then
\begin{gather*}
\bigl|\E[(G_n(X))^r]-\E[(G(X))^r]\bigr|\le r\,\varepsilon_n,
\\
\bigl|G\bigl(M_r^{(G_n)}(X)\bigr)-G\bigl(M_r^{(G)}(X)\bigr)\bigr|
\le(r+1)\,\varepsilon_n,
\end{gather*}
so $M_r^{(G_n)}(X)\to M_r^{(G)}(X)$, and under the local derivative bound of
part (d),
\[
\bigl|M_r^{(G_n)}(X)-M_r^{(G)}(X)\bigr|\le\frac{(r+1)\,\varepsilon_n}{\gamma_r}
\]
for $n$ large, where $\gamma_r>0$ lower-bounds $G'$ near $M_r^{(G)}(X)$.
\end{proposition}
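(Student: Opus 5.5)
The plan mirrors the proof of Theorem~\ref{thm:barycenter-stability}, with the map $v\mapsto v^r$ inserted between the chart and the expectation. The only new ingredient is a Lipschitz bound for this power map on $[0,1]$.

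\textbf{Step 1: Lipschitz bound.} For $u,v\in[0,1]$ and $r\ge1$, the mean value theorem gives
\[
|u^r-v^r|\le r\max(u,v)^{r-1}|u-v|\le r|u-v|,
\]
since the derivative $r s^{r-1}$ is at most $r$ on $[0,1]$. Taking $u=G_n(X)$ and $v=G(X)$, both in $(0,1)$, and using Proposition~\ref{prop:chart-transfer} pointwise, we get $|G_n(X)^r-G(X)^r|\le r\varepsilon_n$. Taking expectations yields the first inequality.

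\textbf{Step 2: chart-coordinate bound.} Set $m^{(r)}=\E[(G(X))^r]$ and $m_n^{(r)}=\E[(G_n(X))^r]$, and let $M_n=M_r^{(G_n)}(X)$. By definition $G_n(M_n)=m_n^{(r)}$ and $G(M_r^{(G)}(X))=m^{(r)}$. First we check that these moments are well defined. Since $G(X)\in(0,1)$ almost surely, $m^{(r)}\in(0,1)$, and likewise $m_n^{(r)}\in(0,1)$, so both lie in the range of the inverse charts. The triangle inequality then gives
\[
|G(M_n)-m^{(r)}|\le|G(M_n)-G_n(M_n)|+|m_n^{(r)}-m^{(r)}|\le\varepsilon_n+r\varepsilon_n,
\]
where the first term is bounded by Proposition~\ref{prop:chart-transfer} and the second by Step~1. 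This is the bound $(r+1)\varepsilon_n$.

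\textbf{Step 3: convergence and quantitative rate.} The convergence $M_n\to M_r^{(G)}(X)$ follows from the continuity of $G^{-1}$ on $(0,1)$ at the interior point $m^{(r)}$, exactly as in part (c) of Theorem~\ref{thm:barycenter-stability}. For the rate, $M_n$ eventually lies in the neighbourhood of $M_r^{(G)}(X)$ where $G'\ge\gamma_r$. The mean value theorem there gives
\[
\gamma_r|M_n-M_r^{(G)}(X)|\le(r+1)\varepsilon_n.
\]

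The only mildly delicate point is Step~1: the crude constant $r$ requires $r\ge1$ so that $s^{r-1}\le1$ on $[0,1]$. Everything else is a verbatim repetition of the barycenter argument.
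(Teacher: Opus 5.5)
Your proposal is correct and follows the paper's proof essentially verbatim. The paper likewise obtains the first bound from the Lipschitz estimate $|s^r-t^r|\le r|s-t|$ on $[0,1]$, then repeats the barycenter-stability argument with $m$ replaced by $\E[(G(X))^r]$ and $2\varepsilon_n$ replaced by $\varepsilon_n+r\varepsilon_n$; your explicit check that $\E[(G(X))^r]\in(0,1)$, so the inverse chart applies, fills in a detail the paper leaves implicit.
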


\begin{proof}
For $s,t\in[0,1]$, $|s^r-t^r|\le r|s-t|$; hence the first bound. The remaining
claims repeat the proof of Theorem~\ref{thm:barycenter-stability} with $m$
replaced by $\E[(G(X))^r]$ and $\varepsilon_n+r\varepsilon_n$ in place of
$2\varepsilon_n$.
\end{proof}

\subsection{Deformed intrinsic geometries and quantiles}

When the base chart is intrinsic, barycenters of deformed geometries take a
strikingly explicit form.

\begin{proposition}[Quantile representation]\label{prop:quantile}
Let $X$ have continuous strictly increasing distribution function $F=F_X$
inducing a chart on $I$, and let $K\in\mathcal K^\circ$. Then
\[
b_{K\circ F}(X)
=F^{-1}\!\bigl(p_K\bigr),
\qquad
p_K=K^{-1}\!\left(\int_0^1K(u)\,du\right).
\]
Thus the barycenter of a deformed intrinsic geometry is the quantile of $X$ at
a level $p_K\in(0,1)$ determined by the deformation map alone, independently of
the law of $X$.
\end{proposition}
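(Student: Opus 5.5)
The plan is to compute the barycenter directly from its definition, using the fact that intrinsic coordinates are uniform. First, set $G=K\circ F$. By Proposition~\ref{prop:correspondence}(a), $G$ is a probability coordinate chart on $I$, so $b_G(X)=G^{-1}(\E[G(X)])$ is defined. Since $F=F_X$ is continuous and strictly increasing on $I$, the variable $U=F(X)$ is $\mathrm{Uniform}(0,1)$, as recalled after Definition~\ref{def:chart}. Hence
\[
\E[G(X)]=\E[K(U)]=\int_0^1K(u)\,du=:m_K .
\]
This quantity depends on $K$ alone. It is well defined because $K$ is continuous and bounded on $[0,1]$.

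Second, I will show that $m_K\in(0,1)$, so that the inverse chart may be applied. Because $K$ is strictly increasing with $K(0)=0$ and $K(1)=1$, we have $0<K(u)<1$ for every $u\in(0,1)$. Continuity then makes the integral strictly between $0$ and $1$: for instance, $K\ge K(1/2)>0$ on $[1/2,1]$ gives $m_K>0$, and the symmetric argument gives $m_K<1$. Since $K$ is a continuous strictly increasing bijection of $[0,1]$, its inverse is defined at $m_K$. Setting $p_K=K^{-1}(m_K)$, we get $p_K\in(0,1)$, because $K^{-1}$ maps $(0,1)$ onto $(0,1)$.

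Third, I invert the composite. From $G=K\circ F$ with both factors bijective, $G^{-1}=F^{-1}\circ K^{-1}$ on $(0,1)$, where $F^{-1}:(0,1)\to I$. Therefore
\[
b_G(X)=F^{-1}\bigl(K^{-1}(m_K)\bigr)=F^{-1}(p_K).
\]
This is the quantile of $X$ at level $p_K$, and $p_K$ depends only on $K$.

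The only delicate step is the second one: showing that $m_K$ lies strictly inside $(0,1)$, so that $p_K$ is an interior level and $F^{-1}(p_K)\in I$. This requires strict monotonicity of $K$; a boundary allocation could give a flat $K$ that equals $0$ on $[0,1)$. Everything else reduces to the uniformity of $F_X(X)$ and the identity $(K\circ F)^{-1}=F^{-1}\circ K^{-1}$.
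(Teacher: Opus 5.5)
Your proof is correct and follows the paper's argument: uniformity of $U=F(X)$ gives $\E[(K\circ F)(X)]=\int_0^1K$, and then $(K\circ F)^{-1}=F^{-1}\circ K^{-1}$. Your explicit check that $\int_0^1K\in(0,1)$ fills in a step the paper leaves implicit. One correction to your closing aside: that check needs only continuity together with $K(0)=0$ and $K(1)=1$, since a continuous $K$ with $K(1)=1$ cannot vanish on $[0,1)$. Strict monotonicity is what you need for $K^{-1}(\int_0^1K)$ to be single-valued and for $K\circ F$ to be a chart, not for the integral to lie in $(0,1)$.
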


\begin{proof}
By the uniformization property of intrinsic charts \cite{Cojocea2026a},
$U=F(X)\sim\mathrm{Uniform}(0,1)$, so
$\E[(K\circ F)(X)]=\E[K(U)]=\int_0^1K(u)\,du$, and
$b_{K\circ F}(X)=(K\circ F)^{-1}\bigl(\int_0^1K\bigr)
=F^{-1}\bigl(K^{-1}(\int_0^1K)\bigr)$.
\end{proof}

\begin{remark}[Reading]\label{rem:quantile-reading}
The identity deformation gives $p_K=1/2$: the intrinsic barycenter is the
median, consistently with the degeneracy analysis of \cite{Cojocea2026b},
whose collapse theorem shows that the fully empirical intrinsic barycenter is
asymptotically the sample median. A deformation reallocates the level: for the
ramp geometries of Section~\ref{sec:examples}, $p_\lambda=
K_\lambda^{-1}\bigl(\int_0^1K_\lambda\bigr)$ depends only on the allocation
vector $\lambda$, so selecting a geometric allocation amounts to selecting a
quantile level---early-concentrated allocations push the barycenter into the
lower quantiles, late-concentrated ones into the upper quantiles.
\end{remark}

\begin{remark}[Relation to Kolmogorov-moment quantiles]
\label{rem:moment-quantiles}
Proposition~\ref{prop:quantile} should be distinguished from the quantile
identities behind the estimators of \cite{CojoceaPele2026}: there, the
\emph{intrinsic initial moment} template
\[
M_r^{(F)}(X)=F^{-1}\bigl(\E[(F(X))^r]\bigr)=F^{-1}\bigl(\tfrac1{r+1}\bigr)
\]
raises the
coordinate to a power and inverts the \emph{base} chart, while here the
deformed-chart \emph{barycenter} template composes the chart with $K$ and
inverts the \emph{composition}. With the power deformation $K(u)=u^r$ the two
templates give quantile levels $\bigl(\tfrac1{r+1}\bigr)^{1/r}$ and
$\tfrac1{r+1}$ respectively. Both families are quantiles of $X$ at
distribution-free levels; the deformation calculus of this paper provides the
systematic geometry behind such identities.
\end{remark}

\section{Examples of probability geometries}
\label{sec:examples}

We illustrate the effect of geometric allocation with the standard normal
distribution, $X\sim\mathcal N(0,1)$, intrinsic chart $F=\Phi$, on the
partition
\[
0<\tfrac13<\tfrac23<1,
\qquad
\mathcal H_3=\{H_1,H_2,H_3\}.
\]
Each interior allocation $\lambda=(\lambda_1,\lambda_2,\lambda_3)$,
$\lambda_k>0$, defines the admissible deformation
$K_\lambda=\sum_k\lambda_kH_k$ and the chart
$G_\lambda=K_\lambda\circ\Phi$. All curves and reported values below were
computed deterministically in \textsf{R} to the reported precision (scripts
supplied as ancillary files); by
Proposition~\ref{prop:quantile}, the barycenters are
$b_{G_\lambda}(X)=\Phi^{-1}(p_\lambda)$ with
$p_\lambda=K_\lambda^{-1}\bigl(\int_0^1K_\lambda\bigr)$.

\subsection{Uniform allocation}

The balanced allocation $\lambda=(\tfrac13,\tfrac13,\tfrac13)$ recovers the
identity deformation: $K_\lambda(u)=u$, the canonical geometry. No probability
region is privileged, $p_\lambda=\tfrac12$, and the barycenter is the median,
\[
b_{G_\lambda}(X)=\Phi^{-1}(0.5)=0 .
\]
Figure~\ref{fig:uniform} shows the deformation map and the resulting chart,
which coincides with the intrinsic chart $\Phi$.

\begin{figure}[t]
\centering
\includegraphics[width=\textwidth]{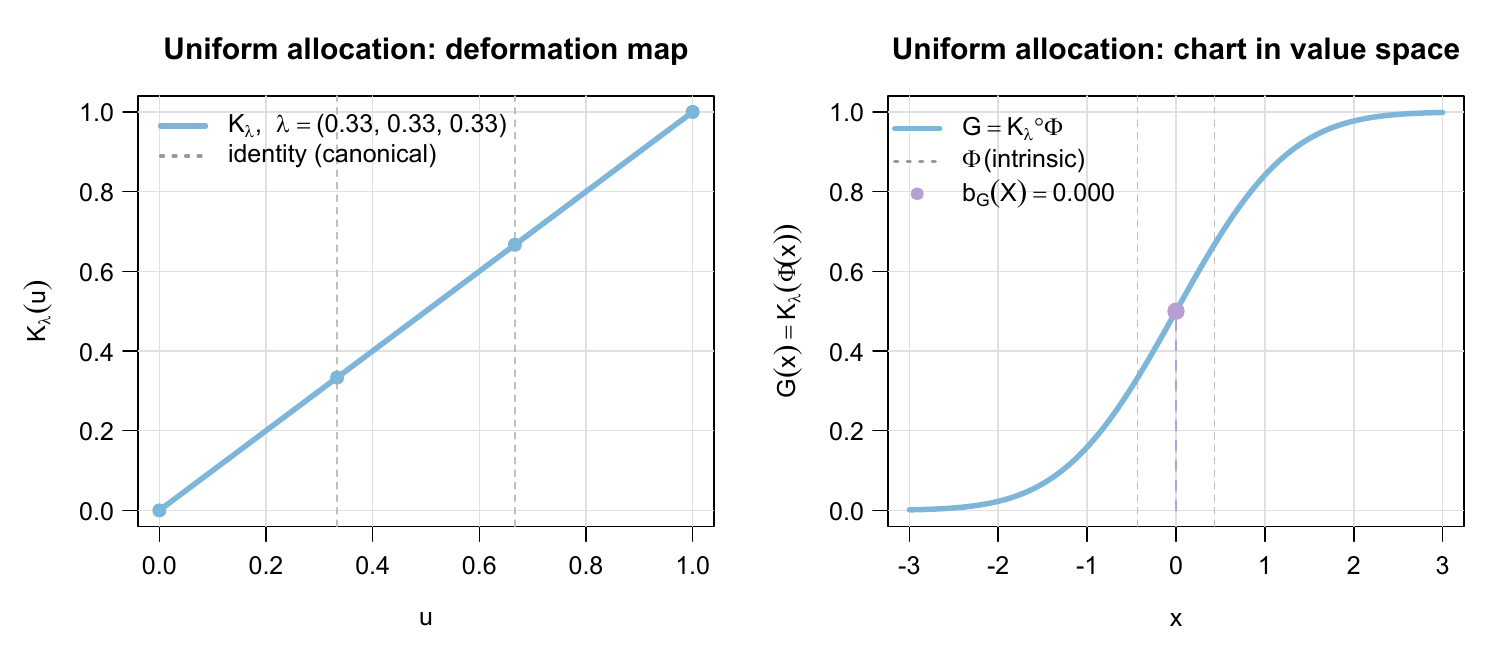}
\caption{Uniform allocation $\lambda=(1/3,1/3,1/3)$ on the partition
$(1/3,2/3)$. Left: the deformation map is the identity (canonical geometry).
Right: the chart $G_\lambda=K_\lambda\circ\Phi$ coincides with the intrinsic
chart; the barycenter (marker) sits at the median, $b_{G_\lambda}(X)=0$.
Exact curves.}
\label{fig:uniform}
\end{figure}

\subsection{Early-concentrated allocation}

The allocation $\lambda=(0.65,0.25,0.10)$ concentrates most of the deformation
on the lower probability region $[0,\tfrac13]$: the geometric intensity is
$1.95$ there, against $0.30$ on the upper region. The exact computation gives
\[
\int_0^1K_\lambda=0.68333,
\qquad
p_\lambda=0.37778,
\qquad
b_{G_\lambda}(X)=\Phi^{-1}(0.37778)=-0.3113 .
\]
The geometry magnifies resolution at small probability levels and,
consistently with Remark~\ref{rem:quantile-reading}, pulls the barycenter into
the lower quantiles of $X$. See Figure~\ref{fig:early}: in value space the
chart rises steeply through the left region and saturates early.

\begin{figure}[t]
\centering
\includegraphics[width=\textwidth]{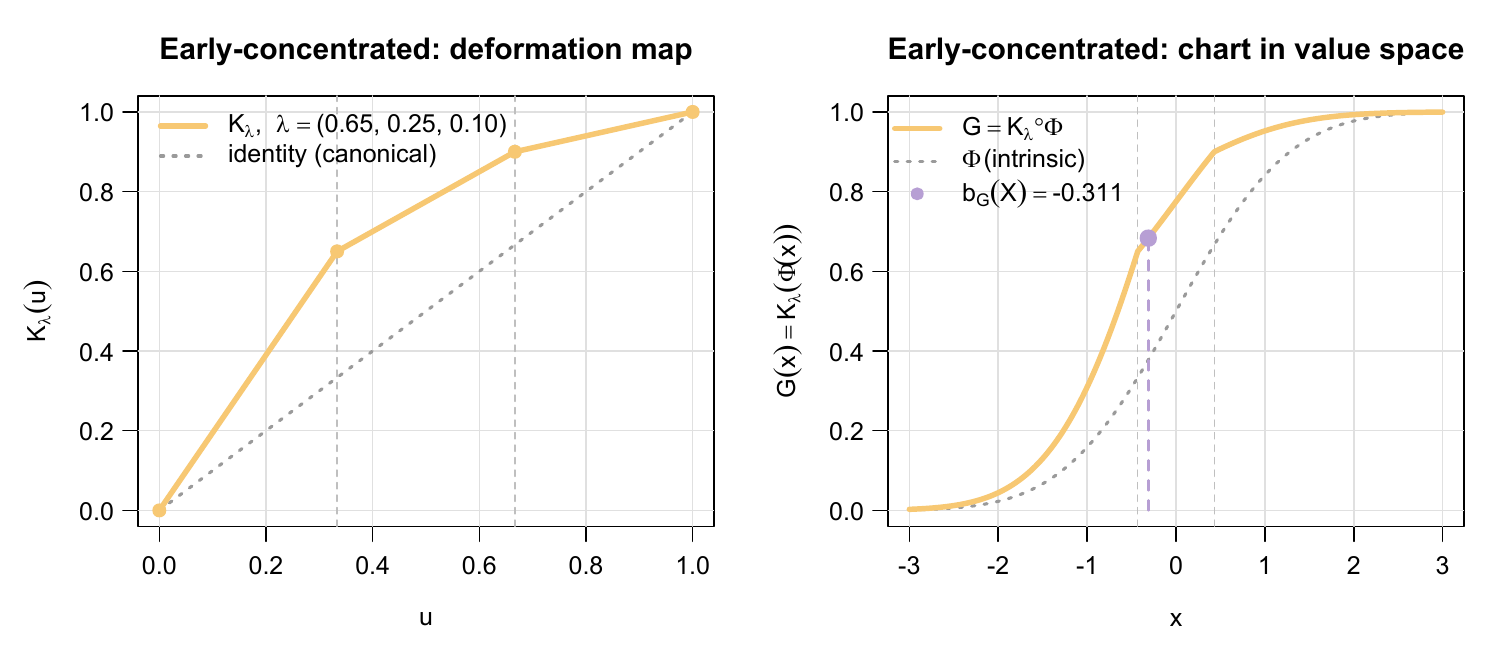}
\caption{Early-concentrated allocation $\lambda=(0.65,0.25,0.10)$. Left:
deformation map against the identity; the kinks sit at the partition points.
Right: the chart $G_\lambda=K_\lambda\circ\Phi$ against the intrinsic chart
$\Phi$; the kinks in value space sit at $\Phi^{-1}(1/3)=-0.431$ and
$\Phi^{-1}(2/3)=0.431$, and the barycenter (marker) is
$b_{G_\lambda}(X)=-0.311$. Exact curves.}
\label{fig:early}
\end{figure}

\subsection{Late-concentrated allocation}

The mirror allocation $\lambda=(0.10,0.25,0.65)$ concentrates deformation near
the upper probability region, with
\[
\int_0^1K_\lambda=0.31667,
\qquad
p_\lambda=0.62222,
\qquad
b_{G_\lambda}(X)=\Phi^{-1}(0.62222)=+0.3113 .
\]
By symmetry of the two allocations about the exchange
$\lambda\mapsto(\lambda_3,\lambda_2,\lambda_1)$ and of the standard normal
about $0$, the barycenter is the exact reflection of the previous one. See
Figure~\ref{fig:late}.

\begin{figure}[t]
\centering
\includegraphics[width=\textwidth]{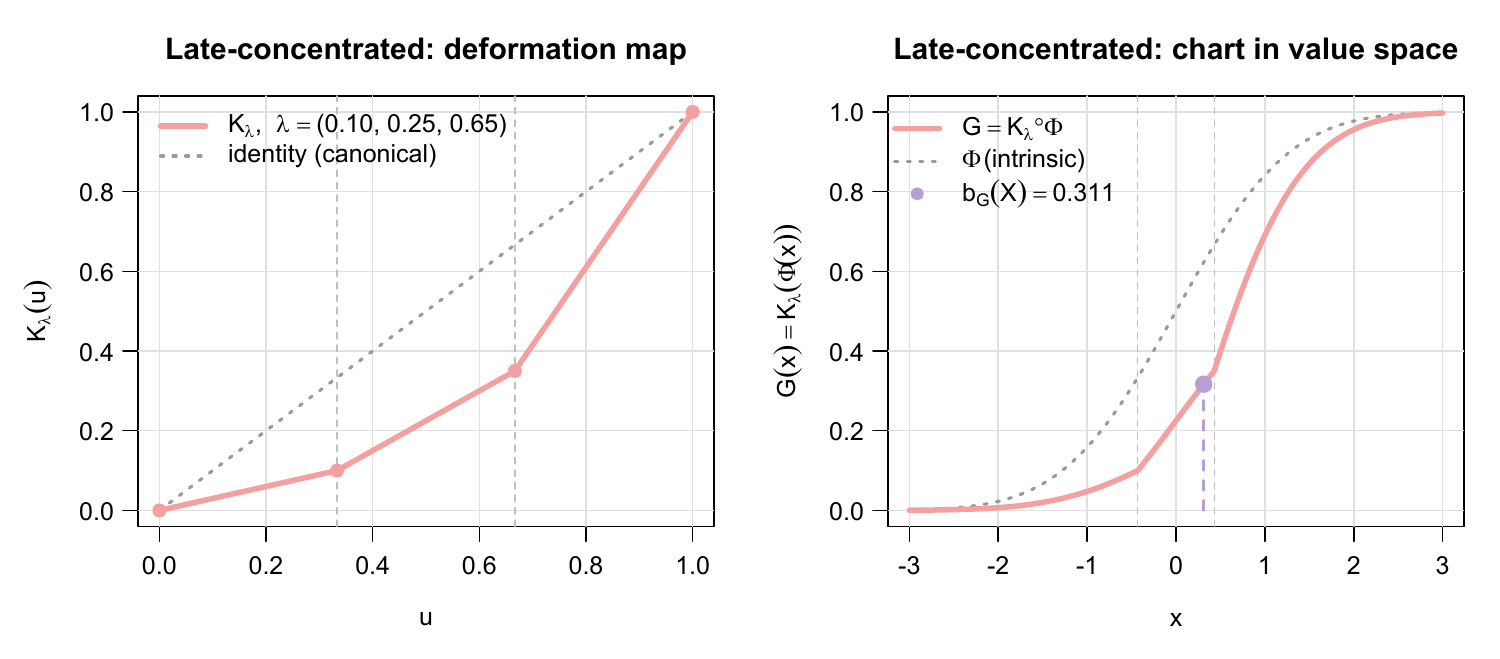}
\caption{Late-concentrated allocation $\lambda=(0.10,0.25,0.65)$. The
geometry allocates most of its deformation to high probability levels,
producing increased resolution in the upper part of the probability scale and
the barycenter $b_{G_\lambda}(X)=+0.311$. Exact curves.}
\label{fig:late}
\end{figure}

\section{Numerical illustration: constructive schemes at work}
\label{sec:numerics}

We quantify the two constructive schemes of Section~\ref{sec:approx} on a
smooth admissible target: the exponential tilt
\[
K_\alpha(u)=\frac{e^{\alpha u}-1}{e^{\alpha}-1},
\qquad \alpha=4 .
\]
This deformation is the distribution function of the canonical exponential
family on $(0,1)$ studied in \cite{Cojocea2026c}, so the
experiment simultaneously illustrates how the geometries arising there are
captured by finite convex dictionaries. For each $n\in\{4,8,16,32,64,128\}$ we
compute, on the uniform partition with $n$ cells, the ramp interpolant
$K^{(n)}$ (Proposition~\ref{prop:density}) and the Bernstein approximant
$B_nK_\alpha$ (Theorem~\ref{thm:bernstein}), their uniform errors, and
the induced barycenter errors $|b_{G_n}(X)-b_G(X)|$ for deformed intrinsic
geometries of a standard normal and a standard Cauchy variable. All quantities
are computed deterministically in \textsf{R} to the precision reported (scripts
supplied as ancillary files); no Monte Carlo variability is involved. The
reference values are
\begin{gather*}
\int_0^1K_4=0.23134,
\qquad
p_{K_4}=0.64881,
\\
b^{\mathcal N}=\Phi^{-1}(p_{K_4})=0.38210,
\qquad
b^{\mathcal C}=0.50481 .
\end{gather*}

\begin{table}[t]
\centering
\caption{Approximation errors for the exponential-tilt deformation
$K_\alpha$, $\alpha=4$: uniform errors of the ramp interpolant and the
Bernstein approximant on $n$ uniform cells, and induced errors on the
probability barycenters of deformed intrinsic geometries (standard normal and
standard Cauchy).}
\label{tab:errors}
\small
\setlength{\tabcolsep}{4.5pt}
\begin{tabular}{r cc cc cc}
\toprule
& \multicolumn{2}{c}{$\dinf(\cdot,K_\alpha)$} & \multicolumn{2}{c}{$|\Delta b_G|$, Gaussian} & \multicolumn{2}{c}{$|\Delta b_G|$, Cauchy} \\
\cmidrule(lr){2-3}\cmidrule(lr){4-5}\cmidrule(lr){6-7}
$n$ & ramp & Bernstein & ramp & Bernstein & ramp & Bernstein \\
\midrule
4 & $7.94\cdot10^{-2}$ & $1.37\cdot10^{-1}$ & $2.37\cdot10^{-2}$ & $1.22\cdot10^{-1}$ & $3.43\cdot10^{-2}$ & $1.71\cdot10^{-1}$ \\
8 & $2.50\cdot10^{-2}$ & $7.12\cdot10^{-2}$ & $4.27\cdot10^{-4}$ & $6.24\cdot10^{-2}$ & $6.24\cdot10^{-4}$ & $8.91\cdot10^{-2}$ \\
16 & $7.04\cdot10^{-3}$ & $3.62\cdot10^{-2}$ & $1.52\cdot10^{-3}$ & $3.14\cdot10^{-2}$ & $2.22\cdot10^{-3}$ & $4.54\cdot10^{-2}$ \\
32 & $1.87\cdot10^{-3}$ & $1.82\cdot10^{-2}$ & $5.98\cdot10^{-5}$ & $1.57\cdot10^{-2}$ & $8.75\cdot10^{-5}$ & $2.29\cdot10^{-2}$ \\
64 & $4.82\cdot10^{-4}$ & $9.13\cdot10^{-3}$ & $1.09\cdot10^{-4}$ & $7.88\cdot10^{-3}$ & $1.59\cdot10^{-4}$ & $1.15\cdot10^{-2}$ \\
128 & $1.22\cdot10^{-4}$ & $4.57\cdot10^{-3}$ & $3.94\cdot10^{-5}$ & $3.94\cdot10^{-3}$ & $5.76\cdot10^{-5}$ & $5.75\cdot10^{-3}$ \\
\bottomrule
\end{tabular}
\end{table}

\begin{figure}[t]
\centering
\includegraphics[width=\textwidth]{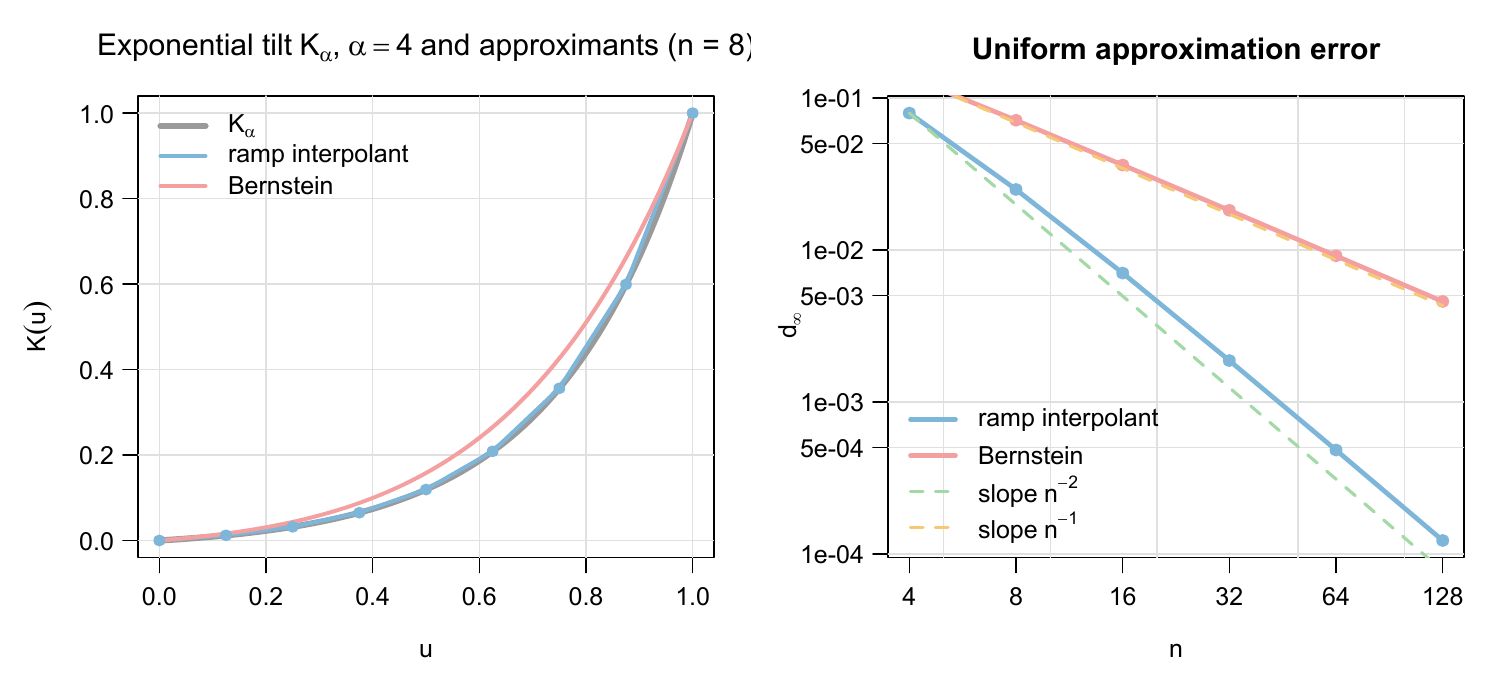}
\caption{Left: the exponential-tilt deformation $K_\alpha$, $\alpha=4$, with
its ramp interpolant and Bernstein approximant at $n=8$. Right: uniform
errors against $n$ (log--log), with reference slopes $n^{-2}$ (ramp,
Remark~\ref{rem:rate}) and $n^{-1}$ (Bernstein,
Remark~\ref{rem:regularization}).}
\label{fig:convergence}
\end{figure}

Three observations, all consistent with the theory.
First, the uniform errors decay at the predicted rates: ramp interpolation at
$n^{-2}$ (the error falls by a factor of about $650$ from $n=4$ to $n=128$,
against $1024$ for an exact quadratic rate on this pre-asymptotic range), the
Bernstein scheme at $n^{-1}$ (factor $30$, against $32$ exactly); see
Figure~\ref{fig:convergence}. Second, the induced barycenter errors respect,
and generally undershoot by a wide margin, the bound
$2\varepsilon_n/\gamma$ of Theorem~\ref{thm:barycenter-stability}(d): for the
ramp scheme at $n=8$ the barycenter error is already $4\cdot10^{-4}$, three
orders of magnitude below the uniform error would suggest, because the signed
error of the interpolant nearly integrates to zero across cells. The
non-monotonicity of the ramp barycenter-error column (compare $n=8$ and
$n=16$) illustrates that the transfer bound controls the error without forcing
monotone decay. Third, the Cauchy barycenter errors exceed the Gaussian ones
by roughly the ratio of quantile densities at the target level, as the
mean-value factor $1/\gamma$ predicts; heavy tails of $X$ affect the constant,
not the rate---in line with the boundedness philosophy of the framework.

\section{Discussion and outlook}
\label{sec:discussion}

The results of this paper reduce the infinite-dimensional selection of a
probability geometry to the selection of an allocation vector in a simplex.
The deformation map does not transform values: it reallocates geometric
intensity across intrinsic probability coordinates, and the ramp weights
$\lambda_k$---the masses of the deformation measure $dK$---describe this
allocation exactly. Monotone piecewise-linear geometries are convex
combinations of local ramp activations (Theorem~\ref{thm:ramp}), admissibility
carves out the interior of the simplex (Proposition~\ref{prop:interior}),
density holds in the admissible class (Proposition~\ref{prop:density}), and
the Bernstein scheme shows that the same convex calculus runs with smooth
order-statistic atoms while automatically restoring admissibility
(Theorem~\ref{thm:bernstein}). The transfer results of
Section~\ref{sec:transfer} guarantee that these approximations are faithful
for the associated geometric functionals: barycenters and Kolmogorov
moments move continuously, with explicit constants, under uniform
perturbations of the geometry.

Several directions follow naturally.

\begin{itemize}
\item \emph{Geometry selection and estimation.} The simplex parametrisation
turns chart selection, the open problem singled out in
\cite{Cojocea2026b}, into finite-dimensional estimation: a family
$\{K_\lambda\circ F_\theta\}$ combines a parametric base chart with a
low-dimensional geometric allocation, and the two-step calibration theory of
\cite{Cojocea2026b} applies conditionally on $\lambda$. Optimizing $\lambda$
against an inferential criterion (asymptotic variance, gross-error
sensitivity) is a convex-parametrised problem left for future work.
\item \emph{Sparse and adaptive allocations.} Sparse vectors $\lambda$
generate geometries concentrated on few probability regions; adaptive schemes
may let the data place the intensity where the law of $X$ demands resolution.
The quantile representation (Proposition~\ref{prop:quantile}) makes the
statistical meaning of such choices transparent for intrinsic bases.
\item \emph{Transported structures.} The $G$-exponential families of
\cite{Cojocea2026c} and the stable-law quantile estimators of
\cite{CojoceaPele2026} are both indexed by charts; replacing their charts by
dictionary geometries $G_\lambda$ yields finite-dimensional families of
transported models whose behaviour under the transfer bounds of
Section~\ref{sec:transfer} is controlled.
\item \emph{Beyond one dimension.} Componentwise deformations of copula-based
multivariate coordinates, and genuinely joint deformations of $[0,1]^d$,
remain open, alongside the multivariate questions raised in
\cite{Cojocea2026a,Cojocea2026b}.
\end{itemize}

In this geometric reading, probability space carries many
geometries at once: the intrinsic coordinates provide the canonical reference,
and deformation maps redistribute resolution across it. The contribution of
this paper is to show that this multiplicity is not unmanageable---it is, up
to uniform approximation with explicit geometric consequences, the interior of
a simplex.


\end{document}